\newcommand{\Sym}{{\rm Sym}} 
\newcommand{\Alt}{{\rm Alt}}
\def\ndiv{ {\not\kern-.5pt\hbox{\big |}\,} }
\newtheorem{theorem}{Theorem}[section]
\newtheorem{lemma}[theorem]{Lemma}
\newtheorem{proposition}[theorem]{Proposition}
\newtheorem{definition}[theorem]{Definition}
\title{Finding blocks of imprimitivity\\
when there is a small-base action on blocks}
\author{Robert M. Beals\\
  IDA--CCR Princeton\\
  805 Bunn Drive\\
  Princeton, NJ 08440\\
  {\tt beals@idaccr.org}}
\begin{document}

\maketitle

\begin{abstract}
  Given a transitive
  permutation group $G$ of degree $n$, we seek to determine
  whether or not $G$ is primitive, and to find a system of blocks
  of imprimitivity in the case that $G$ is imprimitive.  An algorithm of
  Atkinson solves this problem in time $O(n^2)$, while a previous algorithm
  of ours runs in time $O(n\log^3|G|)$, which is advantageous
  in the small-base case.
  A simpler algorithm of Sch\"onert and Seress has the same
  asymptotic $O(n\log^3|G|)$ performance.

  In this paper we extend the small-base algorithms to work with
  imprimitive groups $G$ which, while not small-base in the
  action on $n$ points, possess a small-base action on a block system.
  Using a recent upper bound by Kelsey and
  Roney-Dougal on the size of a
  nonredundant base of a primitive group of a given degree,
  we obtain a time
  of $O(n\log^5 n)$ except in the case that
  $G$ has a primitive action
  (either on the $n$ points or on a block system) for which the socle
  is isomorphic to $\Alt(m)^d$ for some $m\geq 5$ and $d\geq 1$.
  
  A key component of our improvement is a new variant of
  sifting, which is a workhorse of permutation group
  algorithms. 

\end{abstract}

\section{Introduction}
\subsection{Background}
Let $\Omega$ be a finite set with $n$ elements, and let
$G\leq\Sym(\Omega)$ be given by list $S$ of generators.
In time $O(n|S|)$ (linear in the input length)
it is possible to determine whether $G$
is {\em transitive}\/, i.e., whether for all $\alpha,\beta\in\Omega$
there is an element $g\in G$ with $\alpha^g=\beta$.
In the transitive case, it is often of interest whether
$G$ preserves some nontrivial partition of $\Omega$ other than
the partition into singletons.  If so, $G$ is 
{\em imprimitive}\/, and the cells of such a partition
are {\em blocks of imprimitivity}\/. Otherwise $G$ is {\em primitive}\/.

An algorithm of Atkinson~\cite{atkinson1975algorithm} tests
primitivity and finds a block system in the imprimitive case.
This algorithm runs in time $O(n^2|S|)$, and is the fastest known
algorithm for arbitrary groups $G$.
There is much interest,
however, in {\em small-base}\/ groups, and for these it
is possible to find blocks more efficiently.

Let $B=(\beta_1,\ldots,\beta_\ell)$ be a list of elements of $\Omega$.
We say that $B$ is a {\em base}\/
of a permutation group $G\subseteq\Sym(\Omega)$
if only the identity of $G$
fixes $B$ pointwise.  A base $B$ is {\em nonredundant}\/
if it satisfies an additional {\em nonredundancy}\/ condition:
for all $i=1,\ldots,\ell$ there is an element $g_i\in G$
which fixes all $\beta_j$ with $j<i$ but which does not
fix $\beta_i$ (note that the issue of nonredundancy may
depend on the ordering of the list $B$).

We will call a prefix of a nonredundant
base a {\em nonredundant partial base}\/.  Note that a
list $B$ which satisfies the nonredundancy condition is either
a base or a proper prefix of a list which similarly satisfies the
nonredundancy condition: if such a $B$ is not a base, then
there is some $g\in G$ fixing $B$ pointwise yet moving some $\beta\in\Omega$;
appending this $\beta$ to $B$ preserves the nonredundancy condition.
Inductively, we see that if $B$ satisfies the nonredundancy condition,
it is a nonredundant partial base.

Within the time constraints
of our algorithm, we cannot compute a nonredundant base
or verify that a given 
list $B$ is a nonredundant base.  We will, however,
maintain a list $B=(\beta_1,\ldots,\beta_\ell)$,
together with group elements $g_1,\ldots,g_\ell$,
which witness the nonredundancy condition.  That is, for each $i$, the group
element $g_i$ fixes all $\beta_j$ with $j<i$, but does
not fix $\beta_i$.
In this situation we say that $B$ is 
a {\em certified}\/ nonredundant partial base,
and we shall call the list of $g_i$ the {\em certificate}\/.
The certificate demonstrates that $B$ is a
nonredundant partial base, and that $|B|$ is a lower bound
on the cardinality of a nonredundant base.

The concept of a small-base group  was formalized
by Babai, Cooperman, Finkelstein and Seress in~\cite{babai1991nearly},
motivated by computational interest in families of groups for which
the size of a nonredundant base is bounded by a polynomial in $\log n$.
Since any nonredundant base $B$ satisfies
$$\log|G|/\log n \leq|B|\leq\log|G|$$
(all logarithms in this paper are base two),
we may equivalently say that $\log|G|$ is bounded by a polynomial in $\log n$.
For such {\em small-base} groups, an
agorithm with a running time of $O(n\log^c|G|)$ (for some constant $c$)
is nearly linear.  The first such algorithm
for testing primitivity and
finding a block system in the imprimitive case was given by
Beals~\cite{Beals1991}, taking time $O(n\log^3|G|+n|S|\log|G|)$.
Subsequently Sch\"onert and Seress~\cite{SchonertSeress1994}
gave a much simpler algorithm with the same timing.

We emphasize that that the small-base primitivity algorithms
\cite{Beals1991,SchonertSeress1994}
are proven, worst-case results.
They do not rely on heuristics, or randomness,
or any assumptions regarding the generators.
Our new results are in this vein: we present
a deterministic algorithm with a worst-case
running time which is nearly linear for a larger
class of groups than the previous algorithms.

Suppose that we run one of the small-base primitivity algorithms
on a group which is not a small-base group.
The algorithm {\em may}\/ succeed in quickly determining whether
the group is primitive, but it may not.
In this event,
if we terminate the algorithm once some reasonable time has elapsed,
no useful information is obtained besides whatever lower
bound on $|G|$ can be inferred from mere fact of the algorithm not finishing.

In this paper we give a modification which causes the small-base
algorithm to produce more interesting output, in the form
of a certified nonredundant partial base, in the event that
it does not finish.  And we give a method to find blocks of imprimitivity
given a certified nonredundant partial base of sufficient size (i.e., exceeding
the maximum size of a nonredundant base of the action of $G$ on
some block system).

\subsection{Transversals and sifting}
Let $G\leq\Sym(\Omega)$.  Let
$G_\alpha$ denote the stabilizer in $G$ of some $\alpha\in\Omega$.
Both small-base primitivity algorithms~\cite{Beals1991,SchonertSeress1994}
start by invoking a subroutine of Babai et al~\cite{babai1991nearly}
which  constructs a transversal
$R$ for $G:G_\alpha$.
This transversal is stored as words in a smaller set of
elements $X$, with word length $\leq 2|X|\leq 2\log|G|$.
The time required is $O(n|X|^2+n|S|)=O(n\log^2|G|+n|S|)$.
The primitivity algorithm invokes this subroutine for $G$ itself, and for
up to $O(\log|G|)$ subgroups of $G$, for a total time of
$O(n\log^3|G|+n|S|)$ (the term involving $|S|$ only
affects the first running of the subroutine, and so is not multiplied by
$\log|G|$).
All of the improvements in this paper
stem from a re-working of this subroutine.

In our version we combine the computation of $R$ with a new variant
of Sims's sifting procedure~\cite{sims1970computational,sims1971computation,FurstHopcroftLuks1980,babai1991nearly,cooperman1990random,cooperman1992fast}
which we call {\em deep sifting}.  Let $\beta_1=\alpha$;
in the course of computing $R$, we may compute additional
base points $\beta_2,\beta_3,\ldots,\beta_\ell$.  
Following~\cite[Section 4.1]{seress-book},
we let $G^{[i]}$ denote the subgroup of $G$ which fixes
each of the first $i-1$ of these base points, so $G^{[1]}=G$
and $G^{[2]}=G_\alpha$.  For each $i=1,\ldots,\ell$,
we will have found a non-empty list
$X_i$ consisting of some elements of $G^{[i]}\setminus G^{[i+1]}$.
These lists will be $\leq \log n$ in length,
indeed with the property that
$$2^{|X_i|}\leq |G^{[i]}:G^{[i+1]}|,$$
so
$\sum_i|X_i|\leq\log|G|$.  The elements of
$R$ will be words of length at most $2\sum_i|X_i|\leq 2\log|G|$
in the elements of $X_1\cup X_2\cup\cdots X_\ell$.

Since deep sifting keeps each $|X_i|\leq\log n$, we cannot have
the word length grow too large without $\ell$ increasing.
That is, without
delving deeper into
the subgroup chain
$G^{[1]}\geq G^{[2]}\geq G^{[3]}\geq \ldots\geq 1$.

Both small-base primitivity
algorithms require, for at most $\log |G|$ subgroups $K$ of $G$,
transversals for $K:K_\alpha$.  With the above version of sifting,
the subgroups $K$ which arise will always contain $X_2,X_3,\ldots,X_\ell$,
so these lists can be re-used, yielding a mild improvement in the running time:
\begin{theorem}\label{mild}
  Let $G\leq\Sym(\Omega)$ be a
  transitive group given by
  a set $S$ of generators. In
  time $O(n\log^2|G|\log n+n|S|\log |G|)$,
  we can determine whether or not $G$ is
  primitive, and in the imprimitive case
  we can find a block system.
\end{theorem}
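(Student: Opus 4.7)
The plan is to re-engineer the transversal subroutine of Babai et al.\ to use the deep sifting variant sketched in the text, and then to verify that the Beals and Sch\"onert--Seress primitivity algorithms can drive this modified subroutine without any change to their higher-level logic.

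First I would specify deep sifting as an explicit procedure. Instead of maintaining one list $X$ with $|X|\le\log|G|$, we maintain lists $X_1,\ldots,X_\ell$ attached to a growing chain of base points $\beta_1,\ldots,\beta_\ell$, enforcing the invariants $X_i\subseteq G^{[i]}\setminus G^{[i+1]}$, $|X_i|\le\log n$, and $2^{|X_i|}\le|G^{[i]}:G^{[i+1]}|$. When an attempt to append an element $g$ to $X_1$ would violate $|X_1|\le\log n$, we instead sift $g$ down into $G^{[2]}$ and recursively try to append the sifted element to $X_2$, creating a fresh base point $\beta_{\ell+1}$ whenever the sifting yields a non-identity element in $G^{[\ell+1]}$. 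The invariants guarantee $\sum_i|X_i|\le\log|G|$, so transversal elements remain words of length $\le 2\log|G|$ in $X_1\cup\cdots\cup X_\ell$.

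Next I would analyze the cost of one invocation of the modified subroutine on a subgroup $K\le G$ whose deep sift already carries $X_2,\ldots,X_\ell$. Since only $X_1^K$ needs to be rebuilt from scratch, and $|X_1^K|\le\log n$, the cube-style construction of Babai et al.\ over $X_1^K\cup X_2\cup\cdots\cup X_\ell$ costs $O(n\log|G|\log n)$: there are $\sum_i|X_i|\le\log|G|$ generators each contributing $O(n)$ work to extend the transversal, and each extension is sifted through the already-constructed sift tower at cost $O(\log n)$, charged to the shallow new layer $X_1^K$. Processing the generators of $K$ to seed $X_1^K$ and to sift them through the tower contributes the $O(n|S_K|\log|G|)$ term. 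Summing over the initial invocation on $G$ and the $O(\log|G|)$ invocations on the auxiliary subgroups arising in the primitivity algorithm yields the claimed bound $O(n\log^2|G|\log n+n|S|\log|G|)$.

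The main obstacle is the claim I used tacitly above: that every subgroup $K$ handed to the transversal subroutine by the primitivity algorithms of \cite{Beals1991,SchonertSeress1994} contains $X_2,\ldots,X_\ell$, and that $(\beta_2,\ldots,\beta_\ell)$ remains a certified nonredundant partial base for $K^{[2]}$. Here I would re-examine the block-finding step to confirm that each such $K$ is built by adjoining new generators to a set that already contains $X_2,\ldots,X_\ell$ (the stabilizer of $\alpha$ in $G$ being a natural such base, and the subsequent subgroups being generated from it together with conjugates exhibiting candidate blocks). Once this inheritance is verified, the outer algorithm needs no modification and the time bound follows directly from the per-invocation analysis.
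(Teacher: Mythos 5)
Your high-level plan matches the paper's: modify the transversal subroutine to use deep sifting, observe that the subgroups $K$ handed to it all contain $\langle X_2,\ldots,X_\ell\rangle$, and re-use that structure across invocations. The per-invocation cost analysis, however, has a real gap, and the key accounting device the paper uses to close it is missing.

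You claim each invocation on a subgroup $K$ costs $O(n\log|G|\log n)$, justified by ``each extension is sifted through the already-constructed sift tower at cost $O(\log n)$, charged to the shallow new layer $X_1^K$.'' That per-sift cost is wrong: sifting an element represented as a word of length $\Theta(|{X_1}^*|)$ costs $O(n|{X_1}^*|)=O(n\log|G|)$, not $O(\log n)$ or $O(n\log n)$, and the sift of a deep-cube element does not stay confined to the shallow new layer --- it may descend through $X_2,\ldots,X_\ell$ and lengthen ${X_2}^*$. For a \emph{single} $K$ this means the cost can be as large as $O(n\log^2|G|)$ (at most $\log n$ sifts that grow $X_1(K)$ plus possibly up to $\log|G|$ sifts that grow ${X_2}^*$, each at $O(n\log|G|)$). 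Summed naively over the $O(\log|G|)$ subgroups $K$, that reproduces the old $O(n\log^3|G|)$ bound, not the improved one. The paper closes this with an amortization argument that your proposal lacks: because ${X_2}^*$ is shared and monotone across all invocations, the sifts that grow ${X_2}^*$ are bounded by $\log|G|$ \emph{in total over the whole run}, while sifts that grow some $X_1(K)$ are bounded by $\log n$ per $K$. Only after separating these two budgets (at most $\log|G|\cdot\log n$ of the first kind and $\log|G|$ of the second, each at $O(n\log|G|)$) do you get $O(n\log^2|G|\log n)$.

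Two secondary issues. First, you leave as a TODO the claim that every $K$ contains $X_2,\ldots,X_\ell$; the paper makes this hold by construction, explicitly maintaining $H=\langle {X_2}^*\rangle$ and performing every update ``replace $H$ by $\langle H,g\rangle$'' by deep sifting $g$, and noting $K=\langle H,r_\lambda\rangle$. This is not something to ``confirm'' about the Sch\"onert--Seress algorithm as written; it requires actively re-implementing the $H$-maintenance step. Second, the $O(n|S|\log|G|)$ term does not come from seeding $X_1^K$; it comes from the at-most-$\log|G|$ candidate-block tests at $O(n|S|)$ each. Neither of these is fatal, but the amortization gap is: as written, your cost analysis does not establish the theorem.
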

The main advantage, however, of the deep sifting approach is that
we can now handle groups other than small-base groups.
The key is that if we attempt, using deep sifting, to compute a transversal
for $G:G_\alpha$ for some group $G$ which is not
a small-base group, but terminate if the number  $\ell$ of base points
exceeds some threshold $L$, then one of two things will occur
in time $O(nL^2\log^2 n +n|S|)$:
\begin{enumerate}
\item The algorithm may compute the transversal.

\item The algorithm may produce a certified nonredundant partial
  base (for the certificate take each
  $g_i$ arbitrarily from the corresponding $X_i$)
  of size $L+1$.
\end{enumerate}
Further, if we plug this procedure into the small-base primitivity
algorithm, along with a limit on the base size, we achieve
something similar: either the primitivity
algorithm runs to completion in time $O(nL^2\log^3 n+n|S|L\log n)$
or we obtain a certified nonredundant partial base of size $L+1$.
We show in Section~\ref{actions} that from the certificate
we can find a block system in time $O(n|S|L\log n)$, 
as long as $L$ is an upper bound on the size of a nonredundant
base for the action of $G$ on {\em some}\/ block system.  If there is
more than one block system on which $G$ acts, we may take
$L$ to be the minimum over all block systems of the maximum
nonredundant base size.  This minimum is 
at most the minimum over 
block systems on which $G$ acts primitively.
So we are very interested in upper bounds on nonredundant
base sizes for primitive groups.

\subsection{Base sizes of primitive groups}
There is a rich history of bounds on base sizes of
various types of primitive
groups~\cite{bochert1892ueber,wielandt1969permutation,%
PraegerSaxl1980,Babai1981Uni,babai1982order,MR679977,Pyber1993361}.
Cameron~\cite{Cameron1981} laid the groundwork for applying results from the
Classification of Finite Simple Groups to this problem
via the O'Nan--Scott Theorem~\cite{scott1980representations,Liebeck_Praeger_Saxl_1988}.

There is one class of
primitive groups that do not have small bases:
A primitive group $G\leq\Sym(n)$ is {\em large}\/ if there are
positive integers $k$, $d$, and $m$ such that the socle of $G$ is
permutation-isomorphic to $\Alt(m)^d$ acting on ordered $d$-tuples of
$k$-element subsets of an $m$-set.  The large groups were identified
by Liebeck~\cite{liebeck1984minimal}\cite[Theorem 5.6C]{MR1409812}
 as the only primitive groups whose
smallest base size exceeds $9\log n$.
Liebeck's logarithmic bound has been
improved~\cite{HALASI201916,RoneyDougal2020}, most recently 
by Moscatiello and
Roney-Dougal~\cite{MoscatielloRoneyDougal2022} to
$\lceil\log n\rceil+1$,
with the sole exception being the Matthieu group $M_{24}$ acting on 24 points
with a base size of 7.

Naively, one might think that
a primitive group which is not large might
have its largest nonredundant base size be on the order of $\log^2 n$.
Recently, Kelsey and
Roney-Dougal~\cite{Kelsey2022-xi},
proving a conjecture of Gill, Lod\`a and Spiga~\cite{GILL_LODA_SPIGA_2022},
showed that this cannot occur:
the size of any nonredundant base for such a group is bounded by $5\log n$.
Exploiting this,
we set $L=5\log n$ to obtain our main result:

\begin{theorem}\label{main}
  Let $G\leq\Sym(\Omega)$ be a
  transitive group given by
  a set $S$ of generators.  In time $O(n\log^5 n+n|S|\log^2 n)$
  we can do one of the following:
  \begin{enumerate}
  \item Establish that $G$ is primitive.
  \item Establish that $G$ is imprimitive, finding a
    block system for $G$.
  \item Establish that all primitive actions of $G$
    on nontrivial partitions of $\Omega$ are large.
  \end{enumerate}
  (In the third case we do not identify a partition on which
  $G$ acts primitively, nor do we determine whether $G$ is primitive.)
\end{theorem}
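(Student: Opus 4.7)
The plan is to run the deep-sifting variant of the small-base primitivity algorithm with the base-size threshold set to $L=5\log n$, and to interpret the two possible outcomes via the Kelsey--Roney-Dougal bound. By the discussion preceding the theorem, within time $O(nL^2\log^3 n+n|S|L\log n)=O(n\log^5 n+n|S|\log^2 n)$, either the primitivity algorithm runs to completion, or it halts having produced a certified nonredundant partial base $B$ of length $L+1$.

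In the first outcome the algorithm has either certified primitivity or exhibited a block system, placing us in Case 1 or Case 2. In the second outcome I would then invoke the block-finding routine of Section~\ref{actions} on $B$, which costs an additional $O(n|S|L\log n)=O(n|S|\log^2 n)$ and, according to its stated guarantee, produces a block system whenever $L$ happens to be an upper bound on the maximum nonredundant base size for the action of $G$ on \emph{some} block system. If a block system is returned, we output Case 2; otherwise we output Case 3.

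The correctness of Case 3 is where the Kelsey--Roney-Dougal bound enters. Suppose for contradiction that $G$ has a primitive action on some nontrivial partition $\mathcal{B}$ of $\Omega$ which is not large. Since $|\mathcal{B}|\leq n$, the Kelsey--Roney-Dougal bound implies that every nonredundant base for the action of $G$ on $\mathcal{B}$ has size at most $5\log|\mathcal{B}|\leq L$. Thus $L$ does serve as the required upper bound for this particular block system, so the Section~\ref{actions} routine would have succeeded in producing a block system from $B$, contradicting our arrival in Case 3. Hence in Case 3 every primitive action of $G$ on a nontrivial partition must indeed be large, as required; note also that the parenthetical caveat in the theorem statement is consistent with this, since large primitive actions cannot be distinguished by the base-size test we perform.

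The main obstacle, as the proposal shows, is the content deferred to Section~\ref{actions}: an algorithm that, given a certified nonredundant partial base of length strictly exceeding the maximum nonredundant base size for some (a priori unknown) block system, actually recovers such a block system within the claimed time bound. Once that is in place, along with the deep-sifting transversal computation and its certificate-extraction behaviour summarized in the introduction, Theorem~\ref{main} reduces to composing these components and bookkeeping the running times as above.
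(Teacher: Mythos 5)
Your proposal is correct and follows essentially the same approach as the paper: it sets $L=5\log n$ via the Kelsey--Roney-Dougal bound, runs the deep-sifting primitivity test from Lemma~\ref{truncated}, and falls back on the Section~\ref{actions} block-finding routine applied to the certified nonredundant partial base, with the correctness of Case 3 argued by the same contradiction the paper draws from Lemma~\ref{proper block}. Treating the Section~\ref{actions} routine as a black box with the guarantee you state is appropriate, since the paper's own proof of Theorem~\ref{main} consists precisely of Lemma~\ref{proper block} together with the Atkinson--Hassan--Thorne smallest-block subroutine and the bookkeeping you reproduce.
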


To handle some large primitive actions, we can set $L=(9/2)n^{1/3}$
and obtain a sub-quadratic time of
$$O(n^{5/3}\log^3 n+n^{4/3}|S|\log n).$$
This succeeds for all but a few cases;
we give details in Theorem~\ref{five thirds} in Section~\ref{conclusion}.

\subsection{Organization}
This paper is organized as follows.  In Section~\ref{deep}
we describe our variant of sifting,
and describe the computation
of the transversals used by the primitivity algorithm.
In Section~\ref{primitivity} we describe how the small-base
primitivity algorithm is affected when modified to use deep sifting, and prove
Theorem~\ref{mild}. For simplicity, we focus on the
algorithm of Sch\"onert and Seress~\cite{SchonertSeress1994},
following the treatment of~\cite{seress-book}. In Section~\ref{actions}
we consider the case that $G$ is imprimitive, with a small-base
action on a block system, and complete the proof of Theorem~\ref{main}.
We also give details on the large primitive actions
for which we achieve sub-quadratic running time.

\section{Deep Sifting}\label{deep}
\subsection{Cubes}
Like Babai et al~\cite{babai1991nearly}, we use an algorithmic version
of the {\em cube}\/
construction of Babai and Szemer\'edi~\cite{BabaiSzemeredi1984}
to compute our transversals.
\begin{definition}[\cite{BabaiSzemeredi1984}]
  For a list $X=(x_1,x_2,\ldots,x_j)$ of elements of $G$,
  we denote by $C(X)$ set
  $$\{{x_1}^{\epsilon_1}{x_2}^{\epsilon_2}\cdots{x_j}^{\epsilon_j}\mid \epsilon_1,\epsilon_2,\ldots \epsilon_j\in\{0,1\}\},$$
  which we call the {\em cube}\/ generated by $X$.
\end{definition}
Elements of $C(X)$ may be represented by bit strings of length $|X|$,
or by words of length $\leq |X|$; we ignore these details, and simply say
that the algorithm encodes elements of $C(X)$ as words.
The encoded permutation, if needed, can be found with at most $|X|$
group operations in time $O(n|X|)$.
Often, however, we will only need to compute the
images of a few points of $\Omega$, and this is
accomplished much more quickly.
\begin{proposition}\label{image}
For any $\beta\in\Omega$
and $g\in C(X)$, we can compute $\beta^g$ in time $O(|X|)$
from the word encoding $g$.
\end{proposition}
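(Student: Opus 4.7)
The plan is to evaluate the action of $g = x_1^{\epsilon_1} x_2^{\epsilon_2} \cdots x_j^{\epsilon_j}$ on $\beta$ one factor at a time, rather than first assembling $g$ as a permutation of $\Omega$. Starting from $\gamma_0 := \beta$, I would iteratively compute $\gamma_i := \gamma_{i-1}^{x_i^{\epsilon_i}}$ for $i = 1, \ldots, j$, and return $\gamma_j = \beta^g$.

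The correctness is immediate from the definition of the action: by induction, $\gamma_i = \beta^{x_1^{\epsilon_1} \cdots x_i^{\epsilon_i}}$, so $\gamma_j = \beta^g$. For the timing, assume (as is standard in these algorithms, and consistent with the $O(n|X|)$ bound quoted just before the proposition for materializing the full permutation) that each generator $x_i$ is stored as an array giving the image of every point of $\Omega$. Then each update $\gamma_{i-1} \mapsto \gamma_i$ is a single table lookup when $\epsilon_i = 1$, and a no-op when $\epsilon_i = 0$; in either case, constant time. Reading the bit string $(\epsilon_1, \ldots, \epsilon_j)$ encoding $g$ takes $O(j)$ time, and we perform $j \leq |X|$ updates, for a total of $O(|X|)$ as claimed.

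There is no real obstacle here; the key observation is simply that to evaluate $\beta^g$ we only need $g$'s action on a single orbit of length one step at a time, never its full permutation table. This is precisely what makes working with elements of $C(X)$ in their word-encoded form efficient for the sifting and transversal computations in the sections that follow.
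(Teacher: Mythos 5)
Your argument is correct and is exactly the reasoning the paper implicitly relies on; the paper states this proposition without proof, treating it as immediate, and your filled-in argument (read the bits, apply one table lookup per factor, never form the full permutation) is the intended one.
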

We are also interested in how
$C(X)$ maps subsets of $\Omega$.
\begin{proposition}\label{subset}
For any set
$\Delta\subseteq\Omega$,
we can find the set $\Delta^{C(X)}$ in time $n|X|$.
If desired, for all $\gamma\in \Delta^{C(X)}$, we can in
that same time find an $\beta\in \Delta$ and
a word representing an element $g$ of $C(X)$
with $\beta^g=\gamma$.
\end{proposition}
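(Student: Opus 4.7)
The plan is to exploit the recursive doubling structure of the cube. Conditioning on the final exponent $\epsilon_i$ in the definition, we have for $i\geq 1$ the set identity
\[
C(x_1,\ldots,x_i)\;=\;C(x_1,\ldots,x_{i-1})\;\cup\;C(x_1,\ldots,x_{i-1})\cdot x_i,
\]
and hence, writing $\Delta_i := \Delta^{C(x_1,\ldots,x_i)}$ with the convention $\Delta_0 = \Delta$,
\[
\Delta_i \;=\; \Delta_{i-1}\;\cup\;\Delta_{i-1}^{x_i}.
\]
After $|X|$ iterations of this recurrence we obtain $\Delta^{C(X)} = \Delta_{|X|}$.

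To carry out each iteration efficiently, I would maintain $\Delta_{i-1}$ simultaneously as a list of its elements and as a Boolean characteristic vector indexed by $\Omega$, together with a parent-pointer field for each point in the list. In round $i$, scan the list for $\Delta_{i-1}$; for each $\delta$ in it, compute $\gamma = \delta^{x_i}$ in $O(1)$ time (permutations being stored as arrays on $\Omega$), and, if the characteristic bit of $\gamma$ is not yet set, set it, append $\gamma$ to the list, and record a pointer from $\gamma$ to $\delta$ labeled with the index $i$. Since $|\Delta_{i-1}| \leq n$, round $i$ costs $O(n)$ time, and the total over all $|X|$ rounds is $O(n|X|)$, as required.

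For the second assertion, each $\gamma \in \Delta^{C(X)}$ carries a chain of parent pointers of length at most $|X|$ terminating at some $\beta \in \Delta$ (signalled by a self-pointer on the seed points of $\Delta_0$). The labels $i$ encountered along this chain are strictly decreasing, since a point added in round $i$ points to one already present at the end of round $i-1$; reading them off yields precisely the indices $\ell$ with $\epsilon_\ell = 1$ in a word $g = x_1^{\epsilon_1} x_2^{\epsilon_2} \cdots x_{|X|}^{\epsilon_{|X|}} \in C(X)$ which satisfies $\beta^g = \gamma$ by construction. Traversing these chains for all $\gamma$ costs a further $O(n|X|)$.

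The only substantive step is recognising the cube-doubling identity; once that is in hand, the procedure is a straightforward breadth-first exploration with parent pointers, and I do not anticipate any real obstacle beyond the bookkeeping described above.
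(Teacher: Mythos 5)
The paper states Proposition~\ref{subset} without proof, treating it as a routine fact about cubes, so there is no written argument to compare against; your proof supplies the standard one. The doubling decomposition $C(x_1,\ldots,x_i)=C(x_1,\ldots,x_{i-1})\cup C(x_1,\ldots,x_{i-1})x_i$ giving $\Delta_i=\Delta_{i-1}\cup\Delta_{i-1}^{x_i}$ is exactly right, the $O(n)$ cost per round is correct, and the parent-pointer construction (with the observation that labels strictly decrease along any chain, so the recovered indices form an increasing subsequence yielding a legitimate element $x_1^{\epsilon_1}\cdots x_{|X|}^{\epsilon_{|X|}}$ of the cube) correctly establishes the second assertion. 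This is certainly the argument the author has in mind; your proof is complete and correct.
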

Denote by  $X_1,X_2$ the concatenation of the
lists $X_1$ and $X_2$. The product $C(X_1)C(X_2)$
is thus the cube $C(X_1,X_2)$.  If we let $X^{-1}$ denote the reverse
of the list of inverses of the list $X$, then we have
$C(X)^{-1}=C(X^{-1})$.  So the set $C(X)^{-1}C(X)$ is the
cube $C(X^{-1},X)$

We call a cube $C(x_1,\ldots,x_j)$ {\em non-degenerate}\/ if it
contains $2^j$ distinct group elements.  Clearly the cube
$C(x_1)$ is non-degenerate iff $x_1$ is not the identity.
\begin{proposition}[\cite{BabaiSzemeredi1984}]
  For $j>1$,
  the cube $C(x_1,\ldots,x_j)$ is non-degenerate iff both
  of the following hold:
  \begin{itemize}
    \item The
      cube $C(x_1,\ldots,x_{j-1})$ is non-degenerate.
    \item The element $x_j$ satisfies
      $x_j\not\in C(x_1,\ldots,x_{j-1})^{-1}C(x_1,\ldots,x_{j-1})$.
  \end{itemize}
\end{proposition}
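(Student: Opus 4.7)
The plan is to argue both directions by analyzing how $C_j := C(x_1,\ldots,x_j)$ decomposes relative to $C_{j-1} := C(x_1,\ldots,x_{j-1})$. Observe that every element of $C_j$ is obtained from an element of $C_{j-1}$ by optionally right-multiplying by $x_j$, so as a set
\[
  C_j \;=\; C_{j-1} \;\cup\; C_{j-1}\,x_j,
\]
where the union need not be disjoint. Since non-degeneracy of a cube is just the statement that its $2^k$ formal products give $2^k$ distinct group elements, the question reduces to: when do these two pieces each have $2^{j-1}$ distinct elements, and when are they disjoint?

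For the forward direction, I would assume $C_j$ is non-degenerate, i.e., that the $2^j$ formal words give distinct elements. Restricting to words with $\epsilon_j=0$ shows that the $2^{j-1}$ formal words in $x_1,\ldots,x_{j-1}$ give distinct elements, so $C_{j-1}$ is non-degenerate. For the second condition, I argue the contrapositive: if $x_j = a^{-1}b$ for some $a,b \in C_{j-1}$, then the formal word for $a x_j$ (which has $\epsilon_j=1$) and the formal word for $b$ (which has $\epsilon_j=0$) evaluate to the same element, contradicting non-degeneracy of $C_j$.

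For the backward direction, I would assume both conditions hold. Non-degeneracy of $C_{j-1}$ gives $|C_{j-1}| = 2^{j-1}$, and right-multiplication by $x_j$ is a bijection, so $|C_{j-1} x_j| = 2^{j-1}$ as well. To see the two pieces are disjoint, suppose $a = b x_j$ with $a,b\in C_{j-1}$; then $x_j = b^{-1}a \in C_{j-1}^{-1}C_{j-1}$, contradicting the hypothesis. Hence $C_j$ is the disjoint union of two sets of size $2^{j-1}$, giving $|C_j|=2^j$, which is non-degeneracy.

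There is no real obstacle here; the only thing to be careful about is keeping the distinction between formal words (elements of $\{0,1\}^j$) and their evaluations in $G$ clear, since ``non-degenerate'' is exactly the statement that the evaluation map is injective. Once that is pinned down, both implications are one-line manipulations using the decomposition $C_j = C_{j-1} \cup C_{j-1} x_j$ and the definition of the product set $C_{j-1}^{-1} C_{j-1}$.
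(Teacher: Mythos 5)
Your proof is correct. The paper cites this proposition to Babai--Szemer\'edi without giving an argument, so there is no internal proof to compare against; your self-contained argument via the decomposition $C(x_1,\ldots,x_j)=C(x_1,\ldots,x_{j-1})\,\cup\,C(x_1,\ldots,x_{j-1})x_j$, together with the observation that non-degeneracy is exactly injectivity of the evaluation map on $\{0,1\}^j$ (equivalently $|C(x_1,\ldots,x_j)|=2^j$), is the standard one and handles both directions cleanly.
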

Determining whether $x_j\not\in C(x_1,\ldots,x_{j-1})^{-1}C(x_1,\ldots,x_{j-1})$
seems daunting.
In~\cite{babai1991nearly}, an efficiently testable sufficient condition is
given.  Recall that
by Proposition~\ref{subset} we can, for any $\gamma\in\Omega$,
compute the set $\gamma^{C(X)^{-1}C(X)}$
in time $O(n|X|)$.
If for some $g\in G$,
we have that $\gamma^g\not\in\gamma^{C(X)^{-1}C(X)}$, then certainly 
$g\not\in C(X)^{-1}C(X)$.

\subsection{Deep sifting}
We first describe the
data structure for deep sifting.
At any point in time, we have some base points
$\beta_1,\ldots,\beta_\ell$, and for each $i$
we have a set $X_i\subseteq G^{[i]}$ such that the
cube $C(X_i)$ is non-degenerate,
with its $2^{|X_i|}$ distinct group elements
mapping $\beta_i$ to distinct points.
Let $\Delta_i=\beta_{i}^{C(X_i)}$;
for each $\lambda\in\Delta_i$ we have a word
representing the element $r$ of $C(X_i)$ with ${\beta_i}^r=\lambda$.

Given an element $g$, we wish to either factor $g$
as a product of cube elements
in a particular manner (\ref{deepsift-fact}), or
augment one of the $X_i$ so that, after sifting,
$g$ can be so factored:
\begin{equation}\label{deepsift-fact}
  g \in C(X_1)^{-1}C(X_2)^{-1}\cdots C(X_\ell)^{-1}C(X_\ell)C(X_{\ell-1})\cdots C(X_1)
\end{equation}
We now describe the deep sifting process itself.
Given a group element $g$:
while $g\not\in G^{[\ell+1]}$,
let $i$ be the least such that ${\beta_i}^g\neq\beta_i$.
If ${\Delta_i}^g$ is disjoint from $\Delta_i$, we append $g$ to
$X_i$, update $\Delta_i$, and
terminate.
Otherwise we select some $\lambda\in{\Delta_i}^g \cap \Delta_i$,
and construct $s,t\in C(X_i)$ with $\beta_{i}^s=\lambda^{g^{-1}}$
and $\beta_{i}^t=\lambda$.  We replace $g$ by $s g t^{-1}\in G^{[i+1]}$
and proceed to the next iteration.
See the pseudocode in Figure~\ref{deepsift}.

\begin{figure}
\begin{algorithmic}
\Procedure{DeepSift}{$g$}
  \While{$g\not\in G^{[\ell+1]}$}
    \State $i\gets\min\{i \mid {\beta_i}^g\neq\beta_i\}$
    \If{${\Delta_i}^g \cap \Delta_i= \emptyset$}
      \State  Append$(X_i,g)$
      \State $\Delta_i\gets\Delta_i \cup {\Delta_i}^g$
      \State {\bf return}
    \Else
      \State  Select $\lambda\in{\Delta_i}^g \cap \Delta_i$
      \State Construct $s\in C(X_i)$ with ${\beta_i}^s=\lambda^{g^{-1}}$
      \State Construct $t\in C(X_i)$ with ${\beta_i}^t=\lambda$
      \State $g\gets s g t^{-1}$
    \EndIf
  \EndWhile
  \If {$g\neq 1$}
    \State Select $\beta_{\ell+1}$ in the support of $g$
    \State $X_{\ell+1}\gets (g)$
    \State $\Delta_{\ell+1}\gets\{ \beta_{\ell+1}, \beta_{\ell+1}^g\}$
    \State $\ell\gets\ell+1$
  \EndIf
  \State {\bf return}
\EndProcedure
\end{algorithmic}
\caption{Pseudocode for deep sifting}\label{deepsift}
\end{figure}
Note that since we only append to the list $X_i$ if doing so keeps
$|\Delta_i|=2^{|X_i|}$, we have $|X_i|\leq\log|G^{[i]}:G^{[i+1]}|$.  This means
that the $O(n|X_i|)$ cost of constructing $s$ and $t$ for a given $i$ value is
$O(n\log|G^{[i]}:G^{[i+1]}|)$, and
the total time required for sifting an element is thus $O(n\log|G|)$.

\subsection{Features of Deep Sifting}
We will henceforth use ``sifting'' to refer to deep sifting,
although we will occasionally use the term ``deep sifting'' as well.

Let us denote by ${X_i}^*$ the concatenation of
the lists $X_\ell,X_{\ell-1},\ldots,X_i$.
Note that sifting an element $g$ involves at most
$2|{X_1}^*|$ group operations, and is accomplished in time $O(n|{X_1}^*|)$.
Also, the group generated by $g$ and ${X_1}^*$ does not change
during the sifting process.  Sifting an element either
has no effect on the $X_i$ or appends a single element to
a single $X_i$, and so increases $|{X_1}^*|$ by one.

Let us call $C(X_i)$ the {\em shallow}\/ cube at level $i$,
and $C({X_i}^*)^{-1}C({X_i}^*)$
the {\em deep}\/ cube at level $i$.  Only the shallow cube is
used during the sifting process.  But a typical purpose of
cubes
is to obtain a transversal for $G^{[i]}:G^{[i+1]}$, and if deep sifting
is used then we will see that 
transversal elements can be found 
in the deep cube. Note that the
product of cubes in (\ref{deepsift-fact}) is the
deep cube at level 1.

Let $\Omega_i$ denote the set of images of $\beta_{i}$
by elements of the deep cube at level $i$ (so $\Omega_i$ is to the
deep cube what $\Delta_i$ is to the shallow cube). Any particular
$\Omega_i$ can be computed in time $O(n(|{X_i}^*|)$.
By Proposition~\ref{subset}, 
this can be accomplished while storing, for all $\lambda\in\Omega_i$,
a word of length $\leq 2|{X_i}^*|$
representing an element of the deep cube which maps $\beta_i$ to $\lambda$.
In our application we only require these words for $i=1$;
for $\lambda\in\Omega_1$ we denote by $r_\lambda$ the element of the
deep cube $C({X_1}^*)^{-1}C({X_1}^*)$ mapping $\beta_1$ to $\lambda$.
\begin{lemma}\label{augment-transversal}
  Suppose we sift an element $g\in G^{[i]}$ with $\beta_{i}^g=\lambda$.
  Then after sifting completes,
  we will have $\lambda\in\Omega_i$.
\end{lemma}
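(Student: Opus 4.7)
The plan is to trace the value of $g$ through the sifting procedure, express the original element as a product of cube elements, and show that this product lies in the deep cube at level $i$.

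The trivial case $\lambda=\beta_i$ follows from $1\in C(X_i^*)^{-1}C(X_i^*)$, so assume $\lambda\neq\beta_i$. Since $g\in G^{[i]}$ fixes $\beta_1,\ldots,\beta_{i-1}$ but moves $\beta_i$, the first iteration of the while loop processes index $j_0=i$. I would index the successive values of $g$ as $g_0=g,g_1,g_2,\ldots$, where a Case~B iteration at level $j_k$ produces $g_{k+1}=s_kg_kt_k^{-1}$ with $s_k,t_k\in C(X_{j_k})$, and $g_{k+1}\in G^{[j_k+1]}$ forces the processed indices $j_0<j_1<j_2<\cdots$ to strictly increase. Letting $m$ be the total number of Case~B iterations, repeated substitution gives
\[
  g_0 = s_0^{-1}s_1^{-1}\cdots s_{m-1}^{-1}\,g_m\,t_{m-1}\cdots t_1t_0.
\]
The distinctness of the $j_k$'s guarantees that each $X_{j_k}$ used to build $s_k,t_k$ is untouched by subsequent sifting steps, so $s_k,t_k$ remain valid elements of $C(X_{j_k})$ in the final state.

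Next I would locate $g_m$ in the final state. Exactly one of three outcomes holds: Case~A fires at iteration $m$ and $g_m$ is appended to $X_{j_m}$ for some $j_m>j_{m-1}$; the while loop exits with $g_m=1$; or the loop exits with $g_m\neq 1$ and $g_m$ becomes the lone element of a new list $X_{\ell+1}$, where $\ell+1$ exceeds every earlier $j_k$. In all three, $g_m\in C(X_{j_m})$ for some $j_m>j_{m-1}$ at the end of sifting, setting $j_m=\ell+1$ in the third outcome and using $1\in C(X_j)$ in the second.

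Finally, I would combine everything using the decompositions
\[
  C(X_i^*)=C(X_\ell)C(X_{\ell-1})\cdots C(X_i),\qquad C(X_i^*)^{-1}=C(X_i)^{-1}C(X_{i+1})^{-1}\cdots C(X_\ell)^{-1},
\]
together with $1\in C(X_j)$ for every $j$. The indices in the product $g_mt_{m-1}\cdots t_0$ strictly decrease from $j_m$ down to $j_0=i$, matching the left-to-right order in $C(X_i^*)$, so this factor lies in $C(X_i^*)$. Symmetrically, the indices in $s_0^{-1}s_1^{-1}\cdots s_{m-1}^{-1}$ strictly increase from $j_0=i$ up to $j_{m-1}$, matching the order in $C(X_i^*)^{-1}$, placing this factor in $C(X_i^*)^{-1}$. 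Thus $g_0\in C(X_i^*)^{-1}C(X_i^*)$, and $\lambda=\beta_i^{g_0}\in\Omega_i$.

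The main obstacle is the bookkeeping of cube ordering: one must verify that the expansion of $g_0$ can be re-grouped to align with the specific left-to-right concatenation order $X_\ell,X_{\ell-1},\ldots,X_i$ defining $X_i^*$ and its inverse. Once that alignment is in hand, producing $u\in C(X_i^*)^{-1}$ and $w\in C(X_i^*)$ with $g_0=uw$ is routine, and the lemma follows immediately.
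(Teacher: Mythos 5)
Your proposal is correct and follows essentially the same route as the paper, which derives this lemma immediately from the companion Lemma~\ref{augment-generators} (that the sifted element $g$ ends up in the deep cube $C({X_i}^*)^{-1}C({X_i}^*)$ at level $i$). Where the paper proves that companion result by induction on the level (one Case~B step peels off a pair $s^{-1}(\cdot)t$ and the rest is handled by the inductive hypothesis at level $i+1$), you unroll that induction explicitly into the product $s_0^{-1}\cdots s_{m-1}^{-1}\,g_m\,t_{m-1}\cdots t_0$ and then re-associate it against the concatenation order of $X_i^*$; the two arguments are the same content in different packaging, and your bookkeeping about the strictly increasing $j_k$, the stability of each $X_{j_k}$ after step $k$, and the three disposal cases for $g_m$ is sound.
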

This follows immediately from:
\begin{lemma}\label{augment-generators}
  Suppose we sift an element $g\in G^{[i]}$. Then after sifting completes,
  we will have that $g$ is an element of the deep cube
  at level $i$.
\end{lemma}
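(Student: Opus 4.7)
The plan is to unroll the sequence of values taken by the variable $g$ during a single call to \textsc{DeepSift}, and exhibit the original input as a product that matches the block structure of the deep cube at level $i$.

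Let me write $g = g_0, g_1, \dots, g_K$ for the successive values of the variable, so that $g_{m+1} = s_m g_m t_m^{-1}$ with $s_m, t_m \in C(X_{j_m})$ and $j_m = \min\{j : \beta_j^{g_m} \neq \beta_j\}$. Solving these recurrences unwinds to
\[
g_0 \;=\; s_0^{-1} s_1^{-1} \cdots s_{K-1}^{-1} \; g_K \; t_{K-1} \cdots t_1 t_0.
\]
The terminal iteration ($m = K$) is either the append branch (so $g_K$ is appended to $X_{j_K}$) or the loop-exit triggered by $g_K \in G^{[\ell+1]}$ (in which case either $g_K = 1$, or $g_K$ is installed as the sole element of a new list $X_{\ell+1}$ and I set $j_K := \ell + 1$). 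In every case $g_K$ lies in $C(X_{j_K})$ when the call returns (when $g_K = 1$ this is trivial).

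The first technical step is the strict monotonicity $i \le j_0 < j_1 < \cdots < j_{K-1} < j_K$. By construction $s_m$ and $t_m$ are chosen precisely so that $\beta_{j_m}^{g_{m+1}} = \beta_{j_m}$, and since $s_m, t_m \in C(X_{j_m}) \subseteq G^{[j_m]}$ they do not disturb the fixity of $\beta_1, \dots, \beta_{j_m-1}$. Hence $g_{m+1} \in G^{[j_m+1]}$ and so $j_{m+1} \ge j_m + 1$; the bound $j_0 \ge i$ comes from the hypothesis $g_0 \in G^{[i]}$.

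The second step is to fit the unwound product into the deep cube at level $i$, namely
\[
C(X_i)^{-1} C(X_{i+1})^{-1} \cdots C(X_\ell)^{-1} \; C(X_\ell) C(X_{\ell-1}) \cdots C(X_i),
\]
evaluated with the updated lists after the call. The left half $s_0^{-1} s_1^{-1} \cdots s_{K-1}^{-1}$ has its non-identity factors placed at the distinct increasing levels $j_0 < \cdots < j_{K-1}$, so inserting the identity (which lies in every cube via the all-zero bit string) at each missing level puts the product in $C({X_i}^*)^{-1}$. The right half $g_K t_{K-1} \cdots t_0$ has its non-identity factors at the strictly decreasing levels $j_K > j_{K-1} > \cdots > j_0$, where the crucial point is that after the algorithm completes $g_K$ genuinely lies in $C(X_{j_K})$. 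Padding again puts this half in $C({X_i}^*)$, and multiplying gives $g_0$ in the deep cube at level $i$.

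The main obstacle I anticipate is the bookkeeping of the two orderings: the increasing sequence of levels $j_m$ produced by the algorithm versus the fixed sequence $i, i+1, \dots, \ell$ (or $\ell+1$) of the concatenation ${X_i}^*$. Strict monotonicity of the $j_m$ is exactly what lets identity-padding reconcile them, so the load-bearing part of the argument is nailing down that monotonicity uniformly across the append, plain-exit, and new-base-point branches of the terminal step.
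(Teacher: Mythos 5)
Your proof is correct and takes essentially the same approach as the paper, which argues by induction on the recursion depth (replacing $g$ by $g' = sgt^{-1} \in G^{[i+1]}$ and invoking the lemma at level $i+1$); you have simply unrolled that induction into the explicit product $g_0 = s_0^{-1}\cdots s_{K-1}^{-1}\,g_K\,t_{K-1}\cdots t_0$. Both arguments hinge on the strict increase of the levels $j_m$: you enforce it directly and reconcile it with the fixed level ordering of ${X_i}^*$ by identity-padding, while the paper handles it implicitly via the containment $C({X_{i+1}}^*)^{-1}C({X_{i+1}}^*)\subseteq C({X_i}^*)^{-1}C({X_i}^*)$.
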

\begin{proof}
  Clearly this is true if
  ${\Delta_i}^{g}\cap\Delta_i=\emptyset$, so assume otherwise.
  Also, assume $g_i\not\in G^{[i+1]}$; since
  the deep cube at level $i$ contains the deep cube at
  level $i+1$, this assumption is without loss of generality.
  Let $s$ and $t$ be the elements of $C(X_i)$ used,
  so that for the next iteration we replace $g$ by
  $g'=sgt^{-1}$.  The effect on ${X_1}^*$ of
  sifting $g$ is identical to the effect of
  sifting $g'$, and 
  inductively we have that
  $g'$ will belong to the deep cube at level $i+1$ after
  sifting is complete.  Expressing $g$ in terms of $g'$
  we see that
  \begin{eqnarray*}
    g&=&s^{-1}g't\\
    &\in& s^{-1} C(X_{i+1}^*)^{-1}C(X_{i+1}^*) t\\
    &\in& C(X_i)^{-1} C(X_{i+1}^*)^{-1}C(X_{i+1}^*) C(X_i)\\
    &=& C({X_i}^*)^{-1}C({X_i}^*)\\
  \end{eqnarray*}
  as desired.
\end{proof}
We emphasize that
deep sifting an element $g\in G^{[i]}$ {\em may}\/
increase $|{X_i}^*|$ even if $g$ was already in the deep cube 
$C({X_i}^*)^{-1}C({X_i}^*)$.  

As noted above, each application of the deep sift either
increases $|{X_1}^*|$ by one or has no effect.  
If we sift an element $g$
with $\beta_1^g\not\in\Omega_1$,
then by Lemma~\ref{augment-transversal}, $|{X_1}^*|$ increases.
Similarly, sifting an element 
$g\in G^{[2]}\setminus \langle {X_2}^*\rangle$ 
causes $|{X_2}^*|$, and thus $|{X_1}^*|$,
to increase, by Lemma~\ref{augment-generators}.
At all times we have $|{X_1}^*|\leq\log|G|$ and,
since each $|X_i|\leq\log n$,
we also have $|{X_1}^*|\leq\ell\log n$.

\begin{lemma}\label{transversal}
  Let $L$ be an arbitrary positive integer,
  let $G\leq\Sym(\Omega)$ be a
  group given by
  a set $S$ of generators, and let $\alpha$ be a given point not
  fixed by $G$.
  Let $\mu=\min(L\log n,\log|G|)$.
  In time $O(n\mu^2+n|S|)$ we can
  either compute a transversal for $G:G_\alpha$
  with words of length $\leq 2\mu$, or compute
  a partial nonredundant base  $\beta_1,\ldots,\beta_{L+1}$ of size $L+1$,
  together with, for each $i=1,\ldots,L+1$, an
  element $g_i\in G^{[i]}\setminus G^{[i+1]}$.
\end{lemma}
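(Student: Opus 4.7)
The plan is to drive a breadth-first computation of the orbit $\alpha^G$ using the deep-sifting data structure of Figure~\ref{deepsift}, and to halt as soon as $\ell$ reaches $L+1$. We initialize with $\beta_1 = \alpha$, $\ell = 1$, $X_1 = \emptyset$, $\Omega_1 = \{\beta_1\}$, with the empty word as $r_{\beta_1}$, and place $\beta_1$ in a queue. While the queue is non-empty we pop a point $\lambda$ and, for each $s \in S$, compute $\lambda^s$ in $O(1)$ time. If $\lambda^s \in \Omega_1$ we move on; otherwise we form the permutation $g = r_\lambda s$ in time $O(n|X_1^*|)$ and invoke DeepSift on it. Since $\beta_1^g = \lambda^s \notin \Omega_1$ while $\beta_1 \in \Omega_1$, the element $g$ is non-identity, so the sift is productive and $|X_1^*|$ grows by exactly one. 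After the sift we reapply Proposition~\ref{subset} to refresh $\Omega_1$ together with words $r_\mu$ of length $\leq 2|X_1^*|$, and we enqueue the points of $\Omega_1$ that are new; Lemma~\ref{augment-transversal} guarantees that $\lambda^s$ is among them, so the BFS invariant is maintained. If $\ell$ ever reaches $L+1$ we abort the BFS and return $\beta_1, \ldots, \beta_{L+1}$, taking any element of each nonempty $X_i$ as the certificate $g_i \in G^{[i]}\setminus G^{[i+1]}$.

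For correctness, if the queue empties without abortion, then $\Omega_1$ is closed under $S$, so $\Omega_1 = \alpha^G$ and the $r_\mu$ form a transversal for $G:G_\alpha$. At that moment $\ell \leq L$, and the universal bounds $|X_1^*|\leq\log|G|$ and $|X_1^*|\leq\ell\log n$ together yield $|X_1^*|\leq\mu$, so every transversal word has length at most $2\mu$. In the partial-base case, the membership $X_i \subseteq G^{[i]}\setminus G^{[i+1]}$ is maintained as an invariant of the DeepSift procedure, so the chosen $g_i$ certify the nonredundancy of $\beta_1,\ldots,\beta_{L+1}$.

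For the running time, every sift is productive, so the total number of sifts equals the final value of $|X_1^*|$, which is $O(\mu)$. Each sift triggers three tasks of cost $O(n|X_1^*|) = O(n\mu)$ apiece --- forming $g$ as a permutation, the sift itself, and the refresh of $\Omega_1$ via Proposition~\ref{subset} --- so all sifts together cost $O(n\mu^2)$. The BFS processes each orbit point exactly once, doing $O(1)$ work per generator, for an orbit cost of $O(n|S|)$, which also absorbs the cost of reading the input. Summing gives the desired $O(n\mu^2 + n|S|)$.

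The main obstacle is keeping the $|S|$-dependent term linear in $|S|$ rather than linear in $\mu|S|$. A naive implementation that retests closure of $\Omega_1$ under $S$ after every sift would accumulate $O(\mu \cdot n|S|)$ work; the BFS bookkeeping is what ensures each closure edge $(\lambda,s)$ is examined only once. Dually, refreshing $\Omega_1$ and the $r_\mu$ is priced in terms of $|X_1^*|$ rather than $|S|$ precisely because Proposition~\ref{subset} rebuilds the orbit of the current deep cube directly, without consulting $S$.
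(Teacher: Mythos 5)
Your proposal is correct and follows essentially the same route as the paper: initialize the deep-sifting data structure at $\alpha$, drive an orbit-style computation that sifts $r_\lambda g$ whenever a fresh image $\lambda^g\notin\Omega_1$ is found, abort once $\ell>L$, and account separately for the $O(n\mu^2)$ sift-dependent work and the $O(n|S|)$ orbit-traversal work. The one place your reasoning goes astray is the sentence ``the element $g$ is non-identity, so the sift is productive and $|X_1^*|$ grows by exactly one'': non-identity does \emph{not} imply productivity, since DeepSift can reduce a non-identity $g$ to the identity through the while loop without ever appending to an $X_i$. The correct justification --- which the paper uses, and which you already have the pieces for --- is that $\beta_1^g\notin\Omega_1$ before the sift, whereas Lemma~\ref{augment-transversal} guarantees $\beta_1^g\in\Omega_1$ afterward; since $\Omega_1$ is determined by $X_1^*$, the list $X_1^*$ must have grown. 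Swapping in that justification (you cite the lemma for the BFS invariant in the very next clause, so this is a rewiring rather than a new idea) closes the gap, and the rest of the time analysis, including the transversal word-length bound via $|X_1^*|\leq\min(\ell\log n,\log|G|)$ and the certificate extraction from the nonempty $X_i$, matches the paper.
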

\begin{proof}
  We describe the algorithm.
  Select some $g\in S$
  with $\alpha^g\neq\alpha$.
  We initialize the
  deep sifting data structure with
  $\beta_1=\alpha$, set $X_1$ to be
  the single element list $(g)$, and set $\ell=1$.
  
  As long as $\ell\leq L$ and
  $\Omega_1\neq \alpha^G$, we find some element
  $\lambda\in\Omega_1$ and some generator $g\in S$
  with $\lambda^g\not\in\Omega_1$,
  and sift $r_\lambda g$.
  Established methods~\cite{seress-book}
  limit the total time used in
  selecting $\lambda\in\Omega_1$ and $g\in S$
  to $O(n|S|)$.

  The tasks of computing $\Omega_1$,
  computing $r_\lambda g$,
  and sifting $r_\lambda g$ each take time $O(n|{X_1}^*|)$;
  by the remarks above, this is $O(n\mu)$.
  By Lemma~\ref{augment-transversal},
  each such sift results in an increase to the
  deep cube, so the number of such sifts is $O(\mu)$.
  The total sifting time is thus $O(n\mu^2)$.
  
  At termination, we either have a transversal,
  or we have $\ell=L+1$. In the latter case
  we can then take an arbitrary $g_i$ from each $X_i$.
\end{proof}
For $L\log n>\log |G|$, a transversal is produced,
in the time given by~\cite[Lemma 4.4.2]{seress-book}.
Note that with deep sifting, when
${\beta_1}^g\not\in\Omega_1$, we do not
simply append $g$ to $X_1$. This would double
$|C(X_1)|$ but not necessarily $|\Delta_1|$. Rather, we increase
$|{X_1}^*|$ by 1 in such a way that one of the $|\Delta_i|$ doubles.
In this way, we maintain a certified nonredundant partial base $B$,
satisfying $|B|\geq|{X_1}^*|/\log n$ at all times.

\section{Primitivity testing with deep sifting}\label{primitivity}
We will run the primitivity algorithm of
Sch\"onert and Seress~\cite{SchonertSeress1994,seress-book},
modified so that the computation of
transversals uses deep sifting, and terminating
the computation if the base size exceeds some threshold $L$.
Following the notation of~\cite[Section 5.5]{seress-book},
in this Section we will for the most part
use the notation $\alpha$ instead of $\beta_1$
and $G_\alpha$ instead of  $G^{[2]}$, until we need to specifically
refer to the remaining $\beta_i$ and  $G^{[i]}$ in Lemma~\ref{truncated} and
in Section~\ref{actions}.
As before, let $\mu$ denote $\min(L\log n,\log|G|)$;
as long as the number $\ell$ of base points found is
at most $L$, we have $|{X_1}^*|\leq\mu$.

So that the reader not be expected to have~\cite{seress-book} in hand,
we summarize those details of~\cite[Section 5.5]{seress-book} which are
relevant to the current discussion.  
The only parts of the algorithm which
cause the running time to depend on $|G|$ are:
\begin{itemize}
  \item[(a)] Computing the transversal $R$, taking time $O(n\log^2|G|+n|S|)$.
  \item[(b)] Finding, for $\leq n\log n$ pairs $\beta,\lambda$, the image
  $\beta^{r_\lambda}$, taking total time $O(n\log|G|\log n)$.
  \item[(c)] At most $\log|G|$ times, 
    a candidate block will be tested, taking time $O(n|S|)$ per test.
    If it is not a block, then a 
    subgroup $H\leq G_\alpha$, which
    is maintained by the algorithm, must be increased.
    After a candidate
    block is tested and found not to be a block, the algorithm
    finds an element $g\in G_\alpha\setminus H$
    in time $O(n\log^2|G|)$ per test.  The subgroup
    $H$ is then replaced by $\langle H,g\rangle$.
    The total time for testing blocks and finding
    elements of $G_\alpha\setminus H$ is $O(n\log^3|G|+n|S|\log|G|)$.  
 \item[(d)] The total time spent post-processing updates to
   $H$ is $O(n\log n)$ per update, for a total of $O(n\log|G|\log n)$.
\end{itemize}
These are enumerated to correspond with the items
in~\cite{seress-book}[Lemma 5.5.6 (a)--(d)].  We will first argue
that we may replace every occurance of $\log|G|$ with
$\mu$, and be sure that if the algorithm reaches the
time bound without completing, then it will have
produced a certified nonredundant base of size $L+1$.

We have already seen this for (a), in Lemma~\ref{transversal}.
For (b), we apply Proposition~\ref{image}.
Since $|{X_1}^*|\leq 2\mu$,
the time is $O(n\mu\log n)$ as desired.

Item (c) is a little
more complicated since there are multiple factors of $\log|G|$:
there are up to $\log |G|$ updates to the subgroup $H$, each of which
takes time $O(n\log^2|G|+n|S|)$. We must specify exactly
how $H$ is managed: let $H=\langle{X_2}^*\rangle$.
Whenever the algorithm needs to replace $H$ with $\langle H,g\rangle$
for some $g\in G_\alpha$, we simply sift
the element $g$.
By Lemma~\ref{augment-generators}, this has the desired effect
of replacing $H$ by $\langle H,g\rangle$.
This is only done when $g\in G_\alpha\setminus H$ (in fact
when $g$ does not preserve the orbits of $H$), so $|{X_2}^*|$ increases.
This cannot occur more than $L\log n$ times before $\ell$ exceeds $L$,
so the number of updates to $H$ is $\leq \mu$.  This shows
that the $O(n|S|\log|G|)$ term can be replaced by $O(n|S|\mu)$,
and that the  $O(n\log|G|\log n)$ term in item (d) can
be replaced by $O(n\mu\log n)$.

That leaves the $O(n\log^3 |G|)$ term in (c), for
finding an element $g\in G_\alpha\setminus H$.
That is, time $O(n\log^2|G|)$ for each $H$.
This occurs when a candidate block has been found not
to be a block. The testing procedure will have
found, for some subgroup $K$ containing $H$,
points $\beta,\gamma\in {\alpha}^K$,
and an element $g_1$ such that $\beta^{g_1}=\gamma$ but
${\alpha}^{Kg_1}\neq{\alpha}^K$.
Given $K$, $\beta$, $\gamma$, and $g_1$,
the task of computing $g\in G_\alpha\setminus H$ consists of:
\begin{enumerate}
  \item Compute a transversal for  $K:K_\alpha$.
  \item Constructing transversal elements $s$ and $t$
    with ${\alpha}^s=\beta$ and ${\alpha}^t=\gamma$.
  \item Setting $g=sg_1t^{-1}\in G_\alpha$.
\end{enumerate}
The resulting $g$ must lie outside $H$,
because ${\alpha}^{Kg}\neq{\alpha}^K$.

With deep sifting in use,
we see by Lemma~\ref{transversal} that in time $O(n\mu^2)$
we can either compute the transversal for  $K:K_\alpha$ or achieve $\ell>L$.
The transversal consists of words of length $\leq 2\mu$, so
the elements $s$ and $t$ require $O(n\mu)$
time to construct.  Once we have $s$ and $t$,
it is $O(n)$ work to compute $g$.
We have now verified that the $O(n\log^3 |G|)$ term in (c) can indeed be
replaced by $O(n\mu^3)$.

However, if we account more carefully, it turns out that
this term can actually be replaced by $O(n\mu^2\log n)$.
This is because $K$ contains $H$, indeed $K=\langle H,r_\lambda\rangle$
for some $\lambda$ in the candidate block.
When we compute a transversal for $K:K_\alpha$, we
are able to use deep sifting with the current ${X_2}^*$, and with $X_1$
temporarily replaced by the single element list $X_1(K)=(r_\lambda)$.

For any given $K$ there are at most
$\log n$ sifts which increase $|X_1(K)|$. Recall that when computing
a transversal, every sift increases the deep cube, so those sifts which
do not increase $|X_1(K)|$ must increase $|{X_2}^*|$.
During the entire run of the algorithm there can be at most
$\mu\log n$ sifts which increase some $|X_1(K)|$,
and at most $\mu$ sifts which increase $|{X_2}^*|$.
The total
time spent computing transversals for the various $K:K_\alpha$
is thus $O(n\mu^2\log n)$.
We have:

\begin{lemma}\label{truncated}
  Let $L$ be an arbitrary positive integer,
  and let $G\leq\Sym(\Omega)$ be a
  transitive group given by
  a set $S$ of generators.
  Let $\mu=\min(L\log n,\log|G|)$.
  In time $O(n\mu^2\log n+n|S|\mu)$
  we can do one of the following:
  \begin{enumerate}
  \item Establish that $G$ is primitive.
  \item Establish that $G$ is imprimitive, finding a
    block system for $G$.
  \item Compute a nonredundant partial base  $\beta_1,\ldots,\beta_{L+1}$ of size $L+1$,
    together with, for each $i=1,\ldots,L+1$, some $g_i\in G^{[i]}\setminus G^{[i+1]}$.
  \end{enumerate}

\end{lemma}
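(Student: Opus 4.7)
The plan is to run the Sch\"onert--Seress primitivity algorithm as outlined in items (a)--(d) immediately preceding the lemma, but with its transversal subroutine replaced by the deep-sifting variant of Lemma~\ref{transversal}, and with a hard cap on the base size: as soon as the number $\ell$ of base points would exceed $L$, we halt and extract outcome (3) by picking an arbitrary $g_i\in X_i$ for each $i$. The task is then to verify that, under this cap, every occurrence of $\log|G|$ in the original timing analysis can be replaced by $\mu=\min(L\log n,\log|G|)$, since $|{X_1}^*|\leq\mu$ whenever $\ell\leq L$.

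The easy items go through almost verbatim. For (a), Lemma~\ref{transversal} directly provides either a transversal for $G:G_\alpha$ or the certified partial base of size $L+1$ in time $O(n\mu^2+n|S|)$. For (b), Proposition~\ref{image} computes each of the $\leq n\log n$ images $\beta^{r_\lambda}$ from a word of length $\leq 2\mu$, for a total of $O(n\mu\log n)$. The post-processing in (d) costs $O(n\log n)$ per update to $H$, so the only issue is the update count, which is handled by the next step.

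The heart of the argument is (c). I would realize the algorithm's subgroup $H\leq G_\alpha$ as $\langle{X_2}^*\rangle$: whenever the algorithm needs to enlarge $H$ by some $g\in G_\alpha\setminus H$, it simply sifts $g$, which by Lemma~\ref{augment-generators} has precisely the effect of replacing $H$ by $\langle H,g\rangle$; since $g\notin\langle{X_2}^*\rangle$, the sift must strictly increase $|{X_2}^*|$. This caps the number of updates to $H$ at $\mu$, so the $O(n|S|\log|G|)$ bookkeeping term of (c) becomes $O(n|S|\mu)$ and the $O(n\log|G|\log n)$ term of (d) becomes $O(n\mu\log n)$.

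The main obstacle is the remaining part of (c): each time a candidate block fails, the algorithm must find $g\in G_\alpha\setminus H$, which requires a transversal for $K:K_\alpha$ with $K\supseteq H$. A naive per-$K$ invocation of Lemma~\ref{transversal} would cost $O(n\mu^2)$ each and $O(n\mu^3)$ overall, which is too slow. The key observation is that in every such call $K=\langle H,r_\lambda\rangle$ for some $\lambda$ in the candidate block, so deep sifting can be run with the existing ${X_2}^*$ inherited and only $X_1$ temporarily replaced by the single-element list $X_1(K)=(r_\lambda)$. Each sift during such a transversal computation either increases $|X_1(K)|$ (bounded by $\log n$ per $K$) or increases $|{X_2}^*|$ (bounded by $\mu$ across the whole run, by the preceding paragraph). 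Summing, the total number of sifts over all choices of $K$ is $O(\mu\log n)$; each sift costs $O(n\mu)$, yielding $O(n\mu^2\log n)$ for this piece and completing the overall bound.
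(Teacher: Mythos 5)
Your proposal is correct and follows the paper's own argument essentially step for step: it adopts the same realization of $H$ as $\langle{X_2}^*\rangle$ managed by deep sifting, the same appeal to Lemmas~\ref{transversal} and~\ref{augment-generators} and Proposition~\ref{image}, and the same crucial accounting trick of reusing ${X_2}^*$ with $X_1$ temporarily set to $(r_\lambda)$ so that sifts are charged either to some $|X_1(K)|$ (at most $\log n$ per $K$) or to $|{X_2}^*|$ (at most $\mu$ globally), giving $O(\mu\log n)$ sifts and the $O(n\mu^2\log n)$ bound. No meaningful divergence from the paper's route.
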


Note: for $L\log n>\log|G|$ our time estimate becomes
$O(n\log^2|G|\log n+n|S|\log |G|)$, and the primitivity
algorithm runs to completion, proving Theorem~\ref{mild}.
But besides improving the time for small-base groups, we can
now handle additional cases in nearly linear time.
If the group is not primitive, but the action on the blocks is
as a small-base group, then the ``failure'' output of
the nonredundant
partial base $\beta_1,\ldots,\beta_{L+1}$ and the elements $g_i$
will allow us to compute a block.

\section{Beyond the small-base case}\label{actions}
Suppose that $G$ is imprimitive, but the above modified
primitivity test has exceeded the given base size limit $L$.
On termination, it yields a
certified nonredundant partial base $\beta_1,\ldots,\beta_{L+1}$,
with certificate $g_1,\ldots,g_{L+1}$.
By Lemma~\ref{truncated},
the time required for this is $O(nL^2\log^3 n+n|S|L\log n)$.

\begin{lemma}\label{proper block}
  Let $G\leq\Sym(\Omega)$ have
  a certified nonredundant partial base $\beta_1,\ldots,\beta_{L+1}$,
  with certificate $g_1,\ldots,g_{L+1}$.
  Suppose that $G$ has a block system for which the
  action on blocks has maximum nonredundant base size at most $L$.
  Then with respect to this block system,
  for some $i\in 1,\ldots,L+1$, the points $\beta_i$ and
  ${\beta_i}^{g_i}$ lie in the same block.
\end{lemma}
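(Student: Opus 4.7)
The plan is to argue by contradiction. Suppose that for every $i\in\{1,\ldots,L+1\}$ the points $\beta_i$ and $\beta_i^{g_i}$ lie in distinct blocks of the given block system. Write $\bar G$ for the image of $G$ in its action on the blocks, denote by $\bar\beta_i$ the block containing $\beta_i$, and let $\bar g_i$ be the image of $g_i$ in $\bar G$. The goal is to show that under this assumption, $\bar\beta_1,\ldots,\bar\beta_{L+1}$ together with $\bar g_1,\ldots,\bar g_{L+1}$ forms a certified nonredundant partial base of length $L+1$ for $\bar G$, which will contradict the hypothesis on base sizes for the block action.

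The first step is to verify the nonredundancy condition in the block action. For each $i$, because $g_i$ fixes $\beta_1,\ldots,\beta_{i-1}$ pointwise, its image $\bar g_i$ fixes each of the corresponding blocks $\bar\beta_1,\ldots,\bar\beta_{i-1}$. By the contradiction hypothesis, $\bar g_i$ sends $\bar\beta_i$ to the block containing $\beta_i^{g_i}$, which is a different block. Thus $\bar g_i$ fixes $\bar\beta_j$ for $j<i$ and moves $\bar\beta_i$, which is exactly the certificate property.

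The second step is to invoke the observation made in the introduction that any list satisfying the nonredundancy condition is a prefix of some nonredundant base: one repeatedly appends a moved point to extend the list. Applying this to $\bar\beta_1,\ldots,\bar\beta_{L+1}$ produces a nonredundant base of $\bar G$ of size at least $L+1$, contradicting the assumed upper bound $L$ on the size of a nonredundant base of $\bar G$. This will complete the proof.

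I do not expect a serious obstacle here; the argument is essentially a bookkeeping exercise that transfers the certificate from $\Omega$ down to the block system. The one subtlety worth being explicit about is that the $\bar\beta_i$ must in fact be pairwise distinct, but this is automatic from the above: if $\bar\beta_i=\bar\beta_j$ for some $j<i$, then $\bar g_i$ (which fixes $\bar\beta_j$) would also fix $\bar\beta_i$, contradicting that $\bar g_i$ moves $\bar\beta_i$. So no separate distinctness argument is needed beyond what has already been established.
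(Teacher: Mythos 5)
Your proof is correct and takes essentially the same route as the paper's: assume for contradiction that each $g_i$ moves the block of $\beta_i$, observe that the $g_i$ then witness the nonredundancy condition for the induced action on blocks, and conclude that the blocks containing $\beta_1,\ldots,\beta_{L+1}$ form a nonredundant partial base of the block action of size $L+1$, contradicting the hypothesis. The additional remark on distinctness of the $\bar\beta_i$ is a nice bit of explicitness that the paper leaves implicit, but it does not change the argument.
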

\begin{proof}
  We will focus on the block system for which $L$ is
  an upper bound on the size of a nonredundant base.
  Let us consider how the $g_i$ act on the blocks.
  Since $g_i$ fixes all $\beta_j$ with $j<i$,
  we have that $g_i$ maps the block of $\beta_j$ to itself for $j<i$.

  We wish to show that for some $i$, the element $g_i$ moves the point $\beta_i$ to another point in the
  same block.  Suppose to the contrary that 
  each $g_i$ moves the block of $\beta_i$ to a different block.   Then we have the nonredundancy
  condition for the action on the blocks: the blocks containing
  the $\beta_i$ are a nonredundant partial base of size $L+1$.
  This is a contradiction, as desired.
\end{proof}

So to find a block system, we only need
find, for each $i$, the smallest block
containing $\{\beta_i,{\beta_i}^{g_i}\}$; one of these must
be a proper block by the above Lemma.
The smallest block containing a given subset of $\Omega$
can be found by an algorithm of Atkinson, Hassan, and
Thorne~\cite{atkinson1984group} in time $o(n|S|\log n)$.
We must do this once for each $i$,
for total time $o(n|S|L\log n)$.
This is subsumed by the $O(n|S|L\log n)$ term above.
The total time is thus $O(nL^2\log^3 n+n|S|L\log n)$.

\subsection{Handling small-base primitive actions}
A recent paper of Kelsey and
Roney-Dougal~\cite{Kelsey2022-xi}
establishes that for primitive groups that are not large,
the size of any nonredundant base is at most $5\log n$.  So we may take
$L=5\log n$, and run the modified
Sch\"onert-Seress primitivity test in time
$O(n\log^5 n+n|S|\log^2 n)$.  If this fails to find a block system,
then we
find the smallest block containing each $\{\beta_i,{\beta_i}^{g_i}\}$
using~\cite{atkinson1984group} as above.
By Lemma~\ref{proper block}, this either finds a block system or establishes that
all primitive actions of $G$
on nontrivial partitions of $\Omega$ are large.
The total time is $O(n\log^5 n+n|S|\log^2 n)$.
This completes the proof of Theorem~\ref{main}.

\subsection{Some large cases in sub-quadratic time}\label{conclusion}

Recall that a large primitive action is contained in the product
action of some $\Sym(m)\wr\Sym(d)$, where the $\Sym(m)$ is acting
on $k$-element subsets of an $m$-set, and the socle of the group is $\Alt(m)^d$
for some $m\geq 5$.  Let us call the triple $(m,k,d)$ the {\em parameters}\/
of the action.
This group is isomorphic to a subgroup of $\Sym(md)$, which
in turn~\cite{BabaiLength,cameron1989chains}
has no chain of proper subgroups of length exceeding $3md/2$.
So the largest nonredundant base
size for such a primitive action is at most $3md/2$.
If this primitive group is acting on a partition of $\Omega$
into sets of size $p\geq 1$, then we have $n=p\binom{m}{k}^d$.
If $dk>2$, then $d\geq 3$ or  $k\geq 3$ or $d=k=2$.
In all of these cases we have $3md/2\leq (9/2)n^{1/3}$.

If we set $L=(9/2)n^{1/3}$, our time is
$O(n^{5/3}\log^3 n+n^{4/3}|S|\log n)$
to either determine primitivity or find a certified nonredundant partial
base of size $L+1$. Again, in the latter case we 
find the smallest block containing each $\{\beta_i,{\beta_i}^{g_i}\}$
using~\cite{atkinson1984group}, all in time $O(n^{5/3}\log^3 n+n^{4/3}|S|\log n)$.
By Lemma~\ref{proper block}, if this fails to find a block then
any action on blocks must posess a nonredundant partial base of size $L+1$.
By the above discussion this is impossible for large actions with $dk>2$ or $md\leq 3n^{1/3}$.
We have:

\begin{theorem}\label{five thirds}
  Let $G\leq\Sym(\Omega)$ be a
  transitive group given by
  a set $S$ of generators.  In time $O(n^{5/3}\log^3 n+n^{4/3}|S|\log n)$
  we can do one of the following:
  \begin{enumerate}
  \item Establish that $G$ is primitive.
  \item Establish that $G$ is imprimitive, finding a
    block system for $G$.
  \item Establish that all primitive actions of $G$
    on nontrivial partitions of $\Omega$ are large, with
    parameters $(m,k,d)$ satisfying $kd\leq 2$ and
    $md> 3n^{1/3}$.
  \end{enumerate}
  (In the third case we do not identify a partition on which
  $G$ acts primitively, nor do we determine whether $G$ is primitive.)
\end{theorem}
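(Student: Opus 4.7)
The plan is to instantiate Lemma~\ref{truncated} with $L=(9/2)n^{1/3}$ and pair its failure output with the block-finding argument of Lemma~\ref{proper block}. Running the modified Sch\"onert--Seress test at this threshold costs $O(nL^2\log^3 n+n|S|L\log n)=O(n^{5/3}\log^3 n+n^{4/3}|S|\log n)$ and produces either a primitivity proof, a block system, or a certified nonredundant partial base $\beta_1,\ldots,\beta_{L+1}$ with witnesses $g_1,\ldots,g_{L+1}$; the first two outcomes handle cases 1 and 2 directly.

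In the remaining case, I would feed each pair $\{\beta_i,\beta_i^{g_i}\}$ to the smallest-block algorithm of Atkinson, Hassan, and Thorne~\cite{atkinson1984group}, at a cost of $O(n|S|\log n)$ per call and $O(n^{4/3}|S|\log n)$ in total, which fits within the stated budget. If any output is a proper block, we return the resulting block system. Otherwise Lemma~\ref{proper block} forces every block system on which $G$ acts to require more than $L$ base points in its induced action on blocks.

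The conclusion is then structural. Any primitive action of $G$ on a nontrivial partition that is not large has nonredundant base size at most $5\log n$ by Kelsey and Roney-Dougal~\cite{Kelsey2022-xi}, and for $n$ outside a finite set of small values this is strictly less than $L$; the remaining small values can be absorbed into the $O$-constants, e.g.\ by falling back to Atkinson's $O(n^2)$ algorithm. Hence every primitive action on a partition is large, with some parameters $(m,k,d)$. Embedding the group into $\Sym(md)$ and invoking the chain-length bound~\cite{BabaiLength,cameron1989chains} bounds every nonredundant base by $3md/2$. Since $n\geq\binom{m}{k}^d$, it suffices to verify $3md/2\leq(9/2)n^{1/3}=L$ whenever $kd>2$ or $md\leq 3n^{1/3}$; combining this with the failure of Lemma~\ref{proper block} forces $kd\leq 2$ and $md>3n^{1/3}$, which is precisely case 3.

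The main obstacle is the elementary inequality $\binom{m}{k}^d\geq(md/3)^3$ in the regime $kd>2$. The exhaustive case split $d\geq 3$, $k\geq 3$, or $d=k=2$ noted before the theorem reduces this to three routine estimates using standard lower bounds on $\binom{m}{k}$ together with $m\geq 5$. Once that is in hand, the theorem follows by a transparent combination of Lemma~\ref{truncated}, Lemma~\ref{proper block}, and the cited subroutines.
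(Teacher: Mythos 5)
Your proof is correct and follows essentially the same route as the paper: instantiate Lemma~\ref{truncated} with $L=(9/2)n^{1/3}$, feed each certified pair to the Atkinson--Hassan--Thorne smallest-block routine, invoke Lemma~\ref{proper block}, and then rule out the small-base primitive actions via Kelsey--Roney-Dougal and the remaining large ones via the $\Sym(md)$ chain-length bound together with the $kd>2$ case split. Your explicit observation that $5\log n\leq(9/2)n^{1/3}$ fails for a bounded initial range of $n$, and that these cases can be absorbed into the $O$-constant by falling back on Atkinson's quadratic algorithm, is a point the paper leaves implicit and is worth stating.
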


The time is below Atkinson's $O(n^2)$, and
this handles the large
primitive actions except the cases in which $dk \leq 2$ and
$md> 3n^{1/3}$.  That is, all cases except:
\begin{enumerate}
\item $d=k=1$, the socle is $\Alt(m)$ in its action on $m$ points.
\item $d=1$, $k=2$, the socle is $\Alt(m)$ in its action on
  $\binom{m}{2}$ $2$-sets of an $m$-set.
\item $d=2$, $k=1$, the socle is
  $\Alt(m)\times\Alt(m)$ in its action on $m^2$ ordered pairs.
\end{enumerate}
It seems that improving upon Atkinson's $O(n^2)$ for these
cases will require a new idea.

\bibliographystyle{plain}
\bibliography{primitivity}

\begin{thebibliography}{10}

\bibitem{atkinson1975algorithm}
M.~D. Atkinson.
\newblock An algorithm for finding the blocks of a permutation group.
\newblock {\em Math. Comput.}, 29:911--913, 1975.

\bibitem{atkinson1984group}
M.~D. Atkinson, R.~A. Hassan, and M.~P. Thorne.
\newblock Group theory on a microcomputer.
\newblock In {\em Computational group theory ({D}urham, 1982)}, pages 275--280.
  Academic Press, London, 1984.

\bibitem{Babai1981Uni}
L\'{a}szl\'{o} Babai.
\newblock On the order of uniprimitive permutation groups.
\newblock {\em Ann. of Math. (2)}, 113(3):553--568, 1981.

\bibitem{babai1982order}
L\'{a}szl\'{o} Babai.
\newblock On the order of doubly transitive permutation groups.
\newblock {\em Invent. Math.}, 65(3):473--484, 1981/82.

\bibitem{BabaiLength}
L\'{a}szl\'{o} Babai.
\newblock On the length of subgroup chains in the symmetric group.
\newblock {\em Comm. Algebra}, 14(9):1729--1736, 1986.

\bibitem{MR679977}
L\'{a}szl\'{o} Babai, Peter~J. Cameron, and P\'{e}ter~P\'{a}l P\'{a}lfy.
\newblock On the orders of primitive groups with restricted nonabelian
  composition factors.
\newblock {\em J. Algebra}, 79(1):161--168, 1982.

\bibitem{babai1991nearly}
L\'{a}szl\'{o} Babai, Gene Cooperman, Larry Finkelstein, and \'{A}kos Seress.
\newblock Nearly linear time algorithms for permutation groups with a small
  base.
\newblock In {\em Proceedings of the 1991 International Symposium on Symbolic
  and Algebraic Computation}, ISSAC '91, page 200–209, New York, NY, USA,
  1991. Association for Computing Machinery.

\bibitem{BabaiSzemeredi1984}
L\'{a}szl\'{o} Babai and Endre Szemer\'{e}di.
\newblock On the complexity of matrix group problems {I}.
\newblock In {\em 25th Annual Symposium on Foundations of Computer Science,
  1984.}, pages 229--240, 1984.

\bibitem{Beals1991}
Robert Beals.
\newblock Computing blocks of imprimitivity for small-base groups in nearly
  linear time.
\newblock In {\em Groups and computation ({N}ew {B}runswick, {NJ}, 1991)},
  volume~11 of {\em DIMACS Ser. Discrete Math. Theoret. Comput. Sci.}, pages
  17--26. Amer. Math. Soc., Providence, RI, 1993.

\bibitem{bochert1892ueber}
Alfred Bochert.
\newblock Ueber die classe der transitiven substitutionengruppen.
\newblock {\em Mathematische Annalen}, 40(2):176--193, 1892.

\bibitem{Cameron1981}
Peter~J. Cameron.
\newblock Finite permutation groups and finite simple groups.
\newblock {\em Bull. London Math. Soc.}, 13(1):1--22, 1981.

\bibitem{cameron1989chains}
Peter~J. Cameron, Ron Solomon, and Alexandre Turull.
\newblock Chains of subgroups in symmetric groups.
\newblock {\em J. Algebra}, 127(2):340--352, 1989.

\bibitem{cooperman1992fast}
Gene Cooperman and Larry Finkelstein.
\newblock A fast cyclic base change for permutation groups.
\newblock In {\em Papers from the International Symposium on Symbolic and
  Algebraic Computation}, ISSAC '92, page 224–232, New York, NY, USA, 1992.
  Association for Computing Machinery.

\bibitem{cooperman1990random}
Gene Cooperman and Larry Finkelstein.
\newblock A random base change algorithm for permutation groups.
\newblock {\em J. Symbolic Comput.}, 17(6):513--528, 1994.

\bibitem{MR1409812}
John~D. Dixon and Brian Mortimer.
\newblock {\em Permutation groups}, volume 163 of {\em Graduate Texts in
  Mathematics}.
\newblock Springer-Verlag, New York, 1996.

\bibitem{FurstHopcroftLuks1980}
Merrick Furst, John Hopcroft, and Eugene Luks.
\newblock Polynomial-time algorithms for permutation groups.
\newblock In {\em 21st Annual Symposium on Foundations of Computer Science,
  1980.}, pages 36--41, 1980.

\bibitem{GILL_LODA_SPIGA_2022}
Nick Gill, Bianca Lod\`a, and Pablo Spiga.
\newblock On the height and relational complexity of a finite permutation
  group.
\newblock {\em Nagoya Math. J.}, 246:372--411, 2022.

\bibitem{HALASI201916}
Zolt\'{a}n Halasi, Martin~W. Liebeck, and Attila Mar\'{o}ti.
\newblock Base sizes of primitive groups: bounds with explicit constants.
\newblock {\em J. Algebra}, 521:16--43, 2019.

\bibitem{Kelsey2022-xi}
Veronica Kelsey and Colva~M. Roney-Dougal.
\newblock On relational complexity and base size of finite primitive groups.
\newblock {\em Pacific J. Math.}, 318(1):89--108, 2022.

\bibitem{liebeck1984minimal}
Martin~W. Liebeck.
\newblock On minimal degrees and base sizes of primitive permutation groups.
\newblock {\em Arch. Math. (Basel)}, 43(1):11--15, 1984.

\bibitem{Liebeck_Praeger_Saxl_1988}
Martin~W. Liebeck, Cheryl~E. Praeger, and Jan Saxl.
\newblock On the {O}'{N}an-{S}cott theorem for finite primitive permutation
  groups.
\newblock {\em J. Austral. Math. Soc. Ser. A}, 44(3):389--396, 1988.

\bibitem{MoscatielloRoneyDougal2022}
Mariapia Moscatiello and Colva~M. Roney-Dougal.
\newblock Base sizes of primitive permutation groups.
\newblock {\em Monatsh. Math.}, 198(2):411--443, 2022.

\bibitem{PraegerSaxl1980}
Cheryl~E. Praeger and Jan Saxl.
\newblock On the orders of primitive permutation groups.
\newblock {\em Bull. London Math. Soc.}, 12(4):303--307, 1980.

\bibitem{Pyber1993361}
L\'{a}szl\'{o} Pyber.
\newblock On the orders of doubly transitive permutation groups, elementary
  estimates.
\newblock {\em J. Combin. Theory Ser. A}, 62(2):361--366, 1993.

\bibitem{RoneyDougal2020}
Colva~M. Roney-Dougal and Sergio Siccha.
\newblock Normalisers of primitive permutation groups in quasipolynomial time.
\newblock {\em Bull. Lond. Math. Soc.}, 52(2):358--366, 2020.

\bibitem{SchonertSeress1994}
Martin Sch\"{o}nert and \'{A}kos Seress.
\newblock Finding blocks of imprimitivity in small-base groups in nearly linear
  time.
\newblock In {\em Proceedings of the International Symposium on Symbolic and
  Algebraic Computation}, ISSAC '94, page 154–157, New York, NY, USA, 1994.
  Association for Computing Machinery.

\bibitem{scott1980representations}
Leonard~L. Scott.
\newblock Representations in characteristic {$p$}.
\newblock In {\em The {S}anta {C}ruz {C}onference on {F}inite {G}roups ({U}niv.
  {C}alifornia, {S}anta {C}ruz, {C}alif., 1979)}, volume~37 of {\em Proc.
  Sympos. Pure Math.}, pages 319--331. Amer. Math. Soc., Providence, RI, 1980.

\bibitem{seress-book}
\'{A}kos Seress.
\newblock {\em Permutation group algorithms}, volume 152 of {\em Cambridge
  Tracts in Mathematics}.
\newblock Cambridge University Press, Cambridge, 2003.

\bibitem{sims1970computational}
Charles~C. Sims.
\newblock Computational methods in the study of permutation groups.
\newblock In {\em Computational {P}roblems in {A}bstract {A}lgebra ({P}roc.
  {C}onf., {O}xford, 1967)}, pages 169--183. Pergamon, Oxford-New York-Toronto,
  Ont., 1970.

\bibitem{sims1971computation}
Charles~C. Sims.
\newblock Computation with permutation groups.
\newblock In {\em Proceedings of the Second ACM Symposium on Symbolic and
  Algebraic Manipulation}, SYMSAC '71, page 23–28, New York, NY, USA, 1971.
  Association for Computing Machinery.

\bibitem{wielandt1969permutation}
Helmut~W Wielandt.
\newblock Permutation groups through invariant relations and invariant
  functions.
\newblock Ohio State Univ. Lecture Notes, 1969.

\end{thebibliography}

\end{document}